\numberwithin{equation}{section}
\theoremstyle{plain}
\newtheorem{theorem}{Theorem}[section]
\newtheorem{corollary}[theorem]{Corollary}
\newtheorem{lemma}[theorem]{Lemma}
\theoremstyle{definition}
\theoremstyle{remark}
\newtheorem{remark}[theorem]{Remark}
\newcommand{\C}{\mathbb{C}}
\newcommand{\Q}{\mathbb{Q}}
\newcommand{\R}{\mathbb{R}}
\newcommand{\N}{\mathbb{N}}
\newcommand{\Z}{\mathbb{Z}}
\renewcommand{\H}{\mathbb{H}}
\renewcommand{\O}{\mathcal{O}}
\newcommand{\W}{\mathcal{W}}
\renewcommand{\a}{\mathfrak{a}}
\renewcommand{\d}{\mathfrak{d}}
\newcommand{\ep}{\varepsilon}
\newcommand{\set}[1]{\left\{ #1 \right\}}
\newcommand{\br}[1]{\left( #1 \right)}
\newcommand{\bs}{\backslash}
\newcommand{\se}{\subset}
\newcommand{\defC}{:\;}
\newcommand{\OK}{\O_K}
\newcommand{\IK}{\mathcal{I}_K}
\newcommand{\SLZ}{\SL_2(\Z)}
\newcommand{\GLRp}{\GL_2^+(\R)}
\renewcommand{\matrix}[4]{\begin{pmatrix}#1 & #2\\#3 & #4 \end{pmatrix}}
\newcommand{\abcd}{\matrix{a}{b}{c}{d}}
\newcommand{\matrixCol}[2]{\begin{pmatrix}#1 \\ #2\end{pmatrix}}
\DeclareMathOperator{\SL}{SL}
\DeclareMathOperator{\GL}{GL}
\DeclareMathOperator{\sgn}{sgn}
\newcommand{\und}{\quad\text{and}\quad} 
\newcommand{\ssum}{\sideset{}{'}\sum} 
\renewcommand{\Re}{\operatorname{Re}}
\newcommand{\eg}{\mathfrak{e}_\gamma}
\newcommand{\MpZ}{\operatorname{Mp}_2(\Z)}
\newcommand{\E}{\mathcal{E}}
\begin{document}

\author{Johannes J. Buck}

\address{Fachbereich Mathematik, Technische Universit\"at Darmstadt, Schlossgartenstrasse 7,
D--64289 Darmstadt, Germany}
\email{jbuck@mathematik.tu-darmstadt.de}

\thanks{The author was supported by the DFG Collaborative Research Centre TRR 326 Geometry and Arithmetic of Uniformized Structures, project number 444845124.}

\title[Eisenstein series associated to ideals in real quadratic number fields]{Elliptic Eisenstein series associated to ideals in real quadratic number fields}

\date{\today}

\begin{abstract}
In this paper, we compute for odd fundamental discriminants $D>1$ the Fourier expansion of non-holomorphic elliptic Eisenstein series for $\Gamma_0(D)$ with quadratic nebentypus character $\chi_D$ satisfying a certain plus space condition. For each genus of $\Q(\sqrt{D})$, we obtain an associated plus space condition and corresponding Eisenstein series in all positive even weights. In weight $k=2$, the Fourier coefficients are associated to the geometry of Hirzebruch--Zagier divisors on Hilbert modular surfaces.
\end{abstract}

\maketitle

\tableofcontents


\section{Introduction}

Throughout the paper, $K$ is a real quadratic number field of odd discriminant $D$ with a fixed fractional ideal $\a \se K$.
By $N(\cdot)$ we denote the norm function which is defined on $K$ as well as on the ideal group $\IK$ of $K$.
The quadratic form
\[
q: \a \to \Z, \quad q(x) := \frac{N(x)}{N(\a)}
\]
turns $\a$ into an even lattice of signature $(1,1)$. Its dual is given by $\a \d^{-1}$ with $\d = (\sqrt{D})$ being the different ideal. The discriminant group $\a \d^{-1}/\a$ has order $D$ and is cyclic because $D$ is odd.

In \cite[Section 3]{bruinier2003integrals} the authors associate to each even lattice $L$ of signature $(b^+,b^-)$ and isotropic $\beta \in L'/L$ vector valued real analytic Eisenstein series $E_\beta(\tau,s)$ for $\MpZ$ of weights $2 \le k \in \frac{1}{2} \Z$ such that $2k-b^-+b^+ \equiv 0 \pmod 4$ transforming with the Weil representation (cf.~\cite{kudla2010eisenstein} for an alternative approach).

In this paper, we have a closer look at the Eisenstein series $E_0(z,s)$ for $L = \a$ which we call $\E (z,s)$ from now on.
The condition on the weight translates in this case to $k \in 2\N$. Since the weight $k$ is integral, $\E (z,s)$ is a vector valued modular form for $\SLZ$.

From \cite[Proposition 4.5]{scheithauer2009weil} it follows that the zero component of a vector valued modular form for $\SLZ$ is a scalar valued modular form with respect to a character depending on the discriminant group of the lattice. In our case it turns out that independent of the ideal $\a \in \IK$ the character associated to the discriminant group $\a\d^{-1}/\a$ is given by the Kronecker symbol character $\chi_D(n) :=\br{\frac{D}{n}}$, even though on a fixed real quadratic field $K$ there may occur non-isomorphic discriminant groups for different ideals. Therefore, $\E_0(\cdot,s) \in A_k(D,\chi_D)$ where $A_k(D,\chi_D)$ denotes the vector space of scalar valued non-holomorphic modular forms of weight $k$, level $\Gamma_0(D)$ and nebentypus $\chi_D$.

Recall the definition of the Petersson slash operator
\[
(f \mid_k M) (z) = \det(M)^{k/2} (cz +d)^{-k} f(Mz)
\quad\text{for}\quad
M = \abcd \in \GLRp,
\]
which defines a group action of $\GLRp$ on the set of all scalar valued functions $f : \H \to \C$.
The Fricke involution
\[
A_k(D,\chi_D) \to A_k(D,\chi_D),\quad f \mapsto f \mid_k W_D, \quad W_D:=\matrix{0}{-1}{D}{0}
\]
defines an automorphism on $A_k(D,\chi_D)$. Thus, we have $(\E_0(\cdot,s) \mid_k W_D) \in A_k(D,\chi_D)$. We define
\[
E(\tau,s) := \frac{D^{(1-k)/2}}{2} (\E_0(\cdot,s) \mid_k W_D)(\tau)
\]
and make the Fourier expansion of $E(\tau,s)$ explicit in the course of this paper (cf.~Theorem~\ref{s-coef}).
We show that the generalized divisor sum $\sigma(\a,m,s)$ discussed in \cite{buckRepNumbers} occurs as factor in the Fourier coefficients of $E(\tau,s)$. This shows that $E(\tau,s)$ satisfies a certain plus space condition depending on the genus of $\a$ (cf.~Remark~\ref{plus-space}).

Subsequently, we determine the Fourier expansion of the holomorphic Eisenstein series $E(\tau,0)$ in Theorem~\ref{eisen-s0} and compute the derivative $E'(\tau,0)$ in $s$ at $s=0$ in Theorem~\ref{eisen-der-s0}.
Finally, we have a closer look at the weight $k=2$ where the occuring values of the generalized divisor sum represent the volumes of Hirzebruch--Zagier divisors living on the Hilbert modular surface $X(\a) := \SL(\OK \oplus \a) \bs \H^2$ (cf.~Remark~\ref{geometric-interpretation}).

\section*{Acknowledgement}
The author thanks Jan Bruinier for his support and valuable comments.

\section{The Eisenstein series in dependence of \texorpdfstring{$s$}{s}} 

For the complex variable $z$ we use $x$ and $y$ for its real and imaginary part (and for $\tau$ we use $u$ and $v$, respectively).
By \cite[Proposition 3.1]{bruinier2003integrals} the vector valued Eisenstein series
\[
\E(z, s) := \frac{1}{2} \sum_{ (M,\phi) \in \tilde\Gamma_\infty \bs \MpZ } (\mathfrak{e}_0y^s) \mid_k^* (M,\phi)
\]
of weight $k \in 2 \N$ associated to the lattice $\a$\footnote{The dependence on $\a$ is hidden in the definition of the vector valued Petersson slash operator $\mid_k^*$. In its definition the Weil representation associated to the discrimiant group $\a\d^{-1}/\a$ comes into play (cf.~\cite[Section~2]{bruinier2003integrals} for details).} has the Fourier expansion
\[
\E(z, s) = \sum_{\gamma \in \a\d^{-1}/\a} \sum_{n \in \Z - q(\gamma)} c_0 (\gamma, n,s,y) \eg(nx)
\]
where the coefficients $c_0 (\gamma, n,s,y)$ are given by
\[
\begin{dcases}
2 \delta_{0,\gamma} y^s + 2 \pi y^{1-k-s} \frac{\Gamma(k+2s-1)}{\Gamma(k+s)\Gamma(s)} \ssum_{c \in \Z} |2c|^{1-k-2s} H_c^*(0,0,\gamma,0),\quad &n=0,\\
\frac{2^k \pi^{s+k } |n|^{s+k-1} }{\Gamma(s+k)} \W_s(4 \pi n y) \ssum_{c \in \Z} |c|^{1-k-2s} H_c^*(0,0,\gamma,n),\quad &n>0,\\
\frac{2^k \pi^{s+k } |n|^{s+k-1} }{\Gamma(s)} \W_s(4 \pi n y) \ssum_{c \in \Z} |c|^{1-k-2s} H_c^*(0,0,\gamma,n),\quad &n<0.
\end{dcases}
\]
Here $H^*_c(\beta,m,\gamma,n)$ denotes a certain Kloosterman sum (cf.~\cite[eq.~(3.4)]{bruinier2003integrals}) and
\begin{align} \label{Ws-def} 
\mathcal W_s(v) := |v|^{-k/2} W_{\sgn(v)k/2,(1-k)/2-s}(|v|)
\end{align}
is for $s \in \C$ and $v \in \R^\times$ as in \cite[eq.~(3.2)]{bruinier2003integrals} derived from the usual $W$-Whittaker function $W_{\nu,\mu}(z)$.
In \cite{bruinier2003integrals} in the displayed equation after eq.~(3.6) the relation
\[
\ssum_{c \in \Z} |c|^{1-k-2s} H_c^*(0,0,\gamma,n)
= \frac{2 i^k}{\sqrt{D} \zeta(2s+k-1)} \sum_{a=1}^\infty N_{\gamma,n}(b)b^{-k-2s}
\]
is inferred.
Here, the representation number $N_{\gamma,n}(a)$ is defined by
\begin{align} \label{Ngamma-def} 
N_{\gamma,n}(b) := \# \set{ x \in \a/b\a \defC \frac{N(x-\gamma)}{N(\a)} + n  \equiv 0 \pmod b }.
\end{align}
This allows us to rewrite the Fourier coefficients of $\E(z, s)$ to
\[
c_0 (\gamma,n,s,y) = 
\begin{dcases}
2 \delta_{0,\gamma} y^s +  \frac{i^k 2^{3-k-2s} \pi y^{1-k-s} }{\sqrt{D} \zeta(2s+k-1)} \frac{\Gamma(k+2s-1)}{\Gamma(k+s)\Gamma(s)}  \sum_{a=1}^\infty N_{\gamma,0}(b)b^{-k-2s},\, &n=0,\\
\frac{i^k 2^{k+1} \pi^{s+k} |n|^{s+k-1}}{\sqrt{D} \Gamma(s+k)\zeta(2s+k-1)} \W_s(4 \pi n y) \sum_{a=1}^\infty N_{\gamma,n}(b)b^{-k-2s}, &n>0,\\
\frac{i^k 2^{k+1} \pi^{s+k} |n|^{s+k-1}}{\sqrt{D} \Gamma(s)\zeta(2s+k-1)} \W_s(4 \pi n y) \sum_{a=1}^\infty N_{\gamma,n}(b)b^{-k-2s}, &n<0.
\end{dcases}
\]
Now recall that we are actually interested in the Fourier expansion of
\[
E(\tau,s) = \frac{D^{(1-k)/2}}{2} (\E_0(\cdot,s) \mid_k W_D)(\tau).
\]
In the following we make use of the decomposition
\[
W_D = S \cdot V_D
\quad\text{with}\quad
S:=\matrix{0}{-1}{1}{0}
\und
V_D:=\matrix{D}{0}{0}{1}.
\]
Since $\E(z,s)$ is invariant under the $\SLZ$ action of the Petersson slash operator of weight~$k$ with respect to the Weil representation, we obtain from applying it to the matrix $S$ the identity
\[
\E_0(z,s) = \frac{z^{-k}}{\sqrt{D}} \sum_{\gamma \in \a\d^{-1}/\a} \E_\gamma(-1/z,s).
\]
Thus, it follows
\[
(\E_0(\cdot, s) \mid_k S)(\tau) = \tau^{-k} \E_0(-1/\tau, s) = \frac{1}{\sqrt{D}} \sum_{\gamma \in \a\d^{-1}/\a} \E_\gamma(\tau,s).
\]
Therefore,
\begin{align*}
E(\tau,s)
&= \frac{D^{(1-k)/2}}{2 \sqrt{D}} \sum_{\gamma \in \a\d^{-1}/\a} (\E_\gamma(\cdot,s) \mid_k V_D)(\tau)\\
&= \frac{1}{2} \sum_{\gamma \in \a\d^{-1}/\a} \E_\gamma(D \tau,s)
= \frac{1}{2} \sum_{\gamma \in \a\d^{-1}/\a}   \sum_{n \in \Z - q(\gamma)} c_0 (\gamma, n,s,D v) e(nDu).
\end{align*}
Hence, the Fourier coefficients $c(m,s,v)$ of
\[
E(\tau,s) = \sum_{m \in \Z} c(m,s,v) e(mu)
\]
are given by
\[
c(m,s,v) = \frac{1}{2} \sum_{ \substack{\gamma \in \a\d^{-1}/\a \\ m/D + q(\gamma) \in \Z }} c_0 (\gamma, m/D,s,D v).
\]
Note for that reformulation that $Dq(\gamma) \in \Z$ for all $\gamma \in \a\d^{-1}$. Therefore, the Fourier coefficients of $E(\tau,s)$ are indexed by $m \in \Z$.
An important insight following directly from the definition of $N_{\gamma,n}(b)$ (cf.~equation~\eqref{Ngamma-def}) for making the coefficients $c(m,s,v)$ more explicit is
\begin{align*}
\sum_{ \substack{\gamma \in \a\d^{-1}/\a  \\ n + N(\gamma)/N(\a) \in \Z }} N_{\gamma,n}(b)
&= \# \set{ x \in \a\d^{-1}/b\a \defC \frac{N(x)}{N(\a)} + n \equiv 0 \pmod b  }.
\end{align*}
The representation number on the right hand side of this equation plays an important role in \cite{buckRepNumbers} where it is denoted by $G^b(\a,m,0)$ with $m := nD$.
We obtain
\[
c(0,s,v) = D^s v^s +  \frac{i^k 2^{2-k-2s} \pi v^{1-k-s} }{D^{s+k-1/2} \zeta(2s+k-1)} \frac{\Gamma(k+2s-1)}{\Gamma(k+s)\Gamma(s)}  \sum_{a=1}^\infty G^b(\a,0,0)b^{-k-2s}
\]
and
\[
c(m,s,v) = 
\begin{dcases}
\frac{i^k 2^{k} \pi^{s+k} |m|^{s+k-1}}{D^{s+k-1/2} \Gamma(s+k)\zeta(2s+k-1)} \W_s(4 \pi m v) \sum_{a=1}^\infty G^b(\a,m,0) b^{-k-2s}, &m>0,\\
\frac{i^k 2^{k} \pi^{s+k} |m|^{s+k-1}}{D^{s+k-1/2} \Gamma(s)\zeta(2s+k-1)} \W_s(4 \pi m v) \sum_{a=1}^\infty G^b(\a,m,0) b^{-k-2s}, &m<0.
\end{dcases}
\]
Now we can make use of the main result of \cite{buckRepNumbers}, the representation
\[
\sum_{b=1}^\infty G^b(\a,m,0)b^{-s} =|m|^{-s/2}  \frac{\zeta(s-1)}{L(s,\chi_D)} \sigma(\a,m,1-s).
\]
For $m>0$ we obtain
\begin{align*}
c(m,s,v)
&= \frac{i^k 2^{k} \pi^{s+k} |m|^{s+k-1}}{D^{s+k-1/2} \Gamma(s+k)\zeta(2s+k-1)} \W_s(4 \pi m v) \sum_{a=1}^\infty G^b(\a,m,0) b^{-k-2s}\\
&= \frac{i^k 2^{k} \pi^{s+k} |m|^{s+k-1}}{D^{s+k-1/2} \Gamma(s+k)\zeta(2s+k-1)} \W_s(4 \pi m v) |m|^{-(k+2s)/2}  \frac{\zeta(2s+k-1)}{L(2s+k,\chi_D)} \\
&\quad \times \sigma(\a,m,1-2s-k)\\
&= \frac{i^k 2^{k} \pi^{s+k} |m|^{k/2-1} \W_s(4 \pi m v) }{D^{s+k-1/2} \Gamma(s+k)L(2s+k,\chi_D)}  \sigma(\a,m,1-2s-k).
\end{align*}
Using the functional equation
\[
L(2s+k,\chi_D) = \br{\frac{\pi}{D}}^{2s+k-1/2}  \frac{\Gamma(1/2-s-k/2)}{\Gamma(s+k/2)}  L(1-2s-k,\chi_D),
\]
we see
\begin{align*}
c(m,s,v)
&= \frac{i^k 2^{k} \pi^{s+k} |m|^{k/2-1} \W_s(4 \pi m v) }{D^{s+k-1/2} \Gamma(s+k) L(1-2s-k,\chi_D)} \br{\frac{D}{\pi}}^{2s+k-1/2}  \frac{\Gamma(s+k/2)}{\Gamma(1/2-s-k/2)} \\
&\quad \times  \sigma(\a,m,1-2s-k)\\
&= \frac{i^k 2^{k} \pi^{1/2-s} D^s |m|^{k/2-1} \W_s(4 \pi m v) }{ \Gamma(s+k) L(1-2s-k,\chi_D)}  \frac{\Gamma(s+k/2)}{\Gamma(1/2-s-k/2)}  \sigma(\a,m,1-2s-k)\\
&= \frac{i^k 2^{k} \pi^{1/2-s} D^s  \Gamma(s+k/2)}{L(1-2s-k,\chi_D) \Gamma(s+k) \Gamma(1/2-s-k/2)} |m|^{k/2-1} \sigma(\a,m,1-2s-k)\\
&\quad \times \W_s(4 \pi m v).
\end{align*}
Using Legendre's duplication formula, we obtain
\[
\Gamma(s+k/2) = \frac{\sqrt{\pi} \Gamma(2s+k)}{2^{2s+k-1} \Gamma(s+k/2+1/2)}.
\]
Now, Euler's reflection formula yields
\[
\Gamma(s+k/2+1/2) \Gamma(1/2-s-k/2) = \frac{\pi}{ \sin\br{ \pi (s+k/2+1/2) } } = \frac{\pi i^k}{ \cos(\pi s) }.
\]
Combined, we get
\[
\Gamma(s+k/2)
= \frac{ \Gamma(2s+k) \cos(\pi s) \Gamma(1/2-s-k/2) }{2^{2s+k-1} \sqrt{\pi} i^k}.
\]
Inserted into $c(m,s,v)$ we obtain
\begin{align*}
c(m,s,v)
&= \frac{ 2^{1-2s} \pi^{-s} D^s  \Gamma(2s+k) \cos(\pi s)   }{L(1-2s-k,\chi_D) \Gamma(s+k)    } |m|^{k/2-1} \sigma(\a,m,1-2s-k)\W_s(4 \pi m v)\\
&= 2 \br{\frac{D}{4\pi}}^s  \frac{  \cos(\pi s)   \Gamma(2s+k)   }{\Gamma(s+k)  L(1-2s-k,\chi_D) } |m|^{k/2-1} \sigma(\a,m,1-2s-k)\W_s(4 \pi m v).
\end{align*}
Analogously, we get for $m<0$
\[
c(m,s,v) = 2 \br{\frac{D}{4\pi}}^s  \frac{  \cos(\pi s)   \Gamma(2s+k)   }{\Gamma(s)  L(1-2s-k,\chi_D) } |m|^{k/2-1} \sigma(\a,m,1-2s-k)\W_s(4 \pi m v).
\]
For $m=0$ we use
\[
\sum_{b=1}^\infty G^b(\a,0,0)b^{-s} = \zeta(s-1) \frac{L(s-1,\chi_D)}{L(s,\chi_D)}
\]
from \cite[Corollary 6.2]{buckRepNumbers} to obtain
\begin{align*}
&\frac{i^k 2^{2-k-2s} \pi v^{1-k-s} }{D^{s+k-1/2} \zeta(2s+k-1)} \frac{\Gamma(k+2s-1)}{\Gamma(k+s)\Gamma(s)}  \sum_{a=1}^\infty G^b(\a,0,0)b^{-k-2s}\\
=\ &\frac{i^k 2^{2-k-2s} \pi v^{1-k-s} }{D^{s+k-1/2} \zeta(2s+k-1)} \frac{\Gamma(k+2s-1)}{\Gamma(k+s)\Gamma(s)}  \zeta(k+2s-1) \frac{L(k+2s-1,\chi_D)}{L(k+2s,\chi_D)}\\
=\ &\frac{i^k 2^{2-k-2s} \pi v^{1-k-s} }{D^{s+k-1/2}} \frac{\Gamma(k+2s-1)}{\Gamma(k+s)\Gamma(s)}  \frac{L(k+2s-1,\chi_D)}{L(k+2s,\chi_D)}.
\end{align*}
Altogether we have proven
\begin{theorem} \label{s-coef} 
The Fourier coefficients $c(m,s,v)$ of the Eisenstein series
\[
E(\tau,s) = \sum_{m \in \Z} c(m,s,v) e(mu)
\]
are given by
\[
\begin{dcases}
D^s v^s +  \frac{i^k 2^{2-k-2s} \pi v^{1-k-s} }{D^{s+k-1/2}} \frac{\Gamma(k+2s-1)}{\Gamma(k+s)\Gamma(s)}  \frac{L(k+2s-1,\chi_D)}{L(k+2s,\chi_D)}, &m=0,\\
2 \br{\frac{D}{4\pi}}^s  \frac{  \cos(\pi s)   \Gamma(2s+k)   }{\Gamma(s+k)  L(1-2s-k,\chi_D) } |m|^{k/2-1} \sigma(\a,m,1-2s-k)\W_s(4 \pi m v), &m>0,\\
2 \br{\frac{D}{4\pi}}^s  \frac{  \cos(\pi s)   \Gamma(2s+k)   }{\Gamma(s)  L(1-2s-k,\chi_D) } |m|^{k/2-1} \sigma(\a,m,1-2s-k)\W_s(4 \pi m v), &m<0.
\end{dcases}
\]
\end{theorem}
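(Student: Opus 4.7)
The plan is to trace through the chain of identities needed to transform the known vector valued Fourier expansion of $\E(z,s)$ (from \cite[Proposition~3.1]{bruinier2003integrals}) into the scalar Fourier expansion of $E(\tau,s)$, and then to simplify the resulting local factors using results of \cite{buckRepNumbers} together with classical Gamma and $L$-function identities.

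First I would start from the explicit formula for $c_0(\gamma,n,s,y)$, whose only non-elementary ingredient is the sum $\ssum_c |c|^{1-k-2s} H_c^*(0,0,\gamma,n)$. This Kloosterman sum is rewritten via the standard identity recalled in the text in terms of the representation numbers $N_{\gamma,n}(b)$, giving a version of $c_0(\gamma,n,s,y)$ whose arithmetic content is packaged in a Dirichlet series of $N_{\gamma,n}(b)$'s. This step is essentially bookkeeping.

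Next I would handle the Fricke involution. Using the decomposition $W_D = S \cdot V_D$, I would first apply $S$ to $\E_0(z,s)$: invariance of $\E(z,s)$ under the Weil representation on $\SLZ$ yields $\E_0(z,s) = z^{-k} D^{-1/2} \sum_\gamma \E_\gamma(-1/z,s)$, so $(\E_0 \mid_k S)(\tau) = D^{-1/2} \sum_\gamma \E_\gamma(\tau,s)$. Then $V_D$ acts by $\tau \mapsto D\tau$, which rescales each Fourier frequency by $D$ and each imaginary part in $c_0$ by $D$. Summing components over $\gamma$ and reindexing by $m = nD \in \Z$ produces the scalar expansion with coefficients
\[
c(m,s,v) = \frac{1}{2} \sum_{\substack{\gamma \in \a\d^{-1}/\a \\ m/D + q(\gamma) \in \Z}} c_0(\gamma, m/D, s, Dv).
\]
The crucial simplification here is that summing $N_{\gamma,n}(b)$ over the admissible cosets of $\gamma$ yields exactly the representation number $G^b(\a,m,0)$ of \cite{buckRepNumbers}, directly from the definition \eqref{Ngamma-def}.

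The remaining work, and the main obstacle, is the analytic simplification. For $m \neq 0$ I would substitute the Dirichlet series identity $\sum_b G^b(\a,m,0) b^{-s} = |m|^{-s/2} \zeta(s-1) L(s,\chi_D)^{-1} \sigma(\a,m,1-s)$ from \cite{buckRepNumbers}, which cancels the factor $\zeta(2s+k-1)$ appearing in the denominator. Then I would use the functional equation of $L(s,\chi_D)$ with conductor $D$ to trade $L(2s+k,\chi_D)$ for $L(1-2s-k,\chi_D)$, producing extra Gamma and $\pi$ factors. The delicate part is the Gamma-factor arithmetic: Legendre's duplication formula rewrites $\Gamma(s+k/2)$ in terms of $\Gamma(2s+k)$ and $\Gamma(s+k/2+1/2)$, after which Euler's reflection formula applied to $\Gamma(s+k/2+1/2)\Gamma(1/2-s-k/2)$ gives $\pi i^k / \cos(\pi s)$, miraculously clearing the $i^k$ and producing the clean $\cos(\pi s)$ factor. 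Collecting powers of $2$, $\pi$, and $D$ carefully then yields the stated expression for $m > 0$; the $m < 0$ case is identical except that $\Gamma(s+k)$ is replaced by $\Gamma(s)$ as inherited from $c_0$.

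For the $m=0$ case I would use \cite[Corollary~6.2]{buckRepNumbers} to evaluate $\sum_b G^b(\a,0,0) b^{-s} = \zeta(s-1) L(s-1,\chi_D)/L(s,\chi_D)$; the factor $\zeta(2s+k-1)$ cancels again and the constant term $2 \delta_{0,0} (Dv)^s$ from $c_0$ contributes the term $D^s v^s$. The result matches the stated formula in the $m=0$ case. I expect the analytic simplification of the Gamma and $L$-factors to be the most error-prone step; once those cancellations are in place, everything else is direct.
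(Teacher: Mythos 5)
Your proposal follows essentially the same route as the paper: the same rewriting of the Kloosterman sums via $N_{\gamma,n}(b)$, the same decomposition $W_D = S\cdot V_D$, the same identification of the summed representation numbers with $G^b(\a,m,0)$, and the same sequence of functional equation, Legendre duplication, and Euler reflection to produce the $\cos(\pi s)$ factor. The argument is correct and matches the paper's computation step for step.
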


\begin{remark} \label{plus-space} 
In \cite{buckRepNumbers} it is shown that the value of $\sigma(\a,m,s)$ depends not on the ideal $\a \in \IK$ itself but only on its genus.\footnote{Note that instead of using \cite{buckRepNumbers} one could use that by definition the Eisenstein series $\E(z,s)$ (and therefore $E(\tau,s)$ as well) depends on the discriminant group $\a\d^{-1}/\a$ of $\a$ only and not on the ideal itself. However, the discriminant groups of ideals from the same genus are isomorphic.}  With Theorem~\ref{s-coef} we now see that the Eisenstein series $E(\tau,s)$ as well depends not on the ideal $\a \in \IK$ itself but only on its genus. Therefore, we may assume that $\a$ is integral and coprime to $D$ and obtain with \cite[Lemma~7.4]{buckRepNumbers} that all Fourier coefficients vanish identically in $s$ whose index $m$ satisfies $\chi_{D(p)}(N(\a)m)=-1$ for a prime $p \mid D$. Hence, $E(\tau,s)$ lies in a certain plus space determined by the genus of $\a$.
\end{remark}

\begin{remark}
Note that the formulae of Theorem~\ref{s-coef} are a generalization of the formulae in \cite[Theorem~2.2]{bruinier2006cm} to arbitrary odd fundamental discriminants $D$ (\cite{bruinier2006cm} considers the case where $D$ is prime). However, the approach presented here is different from the approach of \cite{bruinier2006cm}. There, the authors did not start with a vector valued Eisenstein series in order to construct their so-called Eisenstein series $E_k^+(\tau,s)$. Instead, they determined the right linear conbination of the already known Eisenstein series $E_k^\infty(\tau,s)$ and $E_k^0(\tau,s)$ associated to the two cusps $0$ and $\infty$ of $\Gamma_0(D)$ in order to match the plus space condition. Since for prime $D$ there exists only one genus, in \cite{bruinier2006cm} there is no need to introduce an ideal $\a \in \IK$ representing the genus.

Further note that the divisor sum in \cite{bruinier2006cm} depends on the weight $k$ whereas our divisor sum is independent of $k$. This explains our additional factor $|m|^{k/2-1}$. Beyond that, in \cite{bruinier2006cm} there is no need for a definition of the divisor sum $\sigma(\a,m,s)$ for negative $m$ since it agrees with $\sigma(\a,|m|,s)$ because $D$ is prime. In our more general situation, however, there are cases where $\sigma(\a,m,s) \ne \sigma(\a,-m,s)$.
\end{remark}

\section{The Eisenstein series and its derivative at \texorpdfstring{$s=0$}{s=0}} 

With Theorem~\ref{s-coef} it is easy to prove the next theorem.
\begin{theorem} \label{eisen-s0} 
The Eisenstein series $E(\tau,0)$ is holomorphic. It has the Fourier expansion
\[
E(\tau,0) = 1 + \frac{2}{ L(1-k,\chi_D)} \sum_{m=1}^\infty |m|^{k/2-1} \sigma(\a,m,1-k) e(m\tau).
\]
\end{theorem}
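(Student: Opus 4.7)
The plan is to substitute $s=0$ into the Fourier coefficients $c(m,s,v)$ provided by Theorem~\ref{s-coef} and to handle the three cases $m<0$, $m=0$, $m>0$ separately. For $m<0$, the coefficient carries the factor $1/\Gamma(s)$, which has a simple zero at $s=0$, while every other factor in the formula — the power $(D/(4\pi))^s$, the quantities $\cos(\pi s)$ and $\Gamma(2s+k)$, the inverse $L$-value $1/L(1-2s-k,\chi_D)$, the divisor sum $\sigma(\a,m,1-2s-k)$, and the Whittaker piece $\W_s(4\pi m v)$ — stays analytic and finite in a neighborhood of $s=0$. In particular $L(1-k,\chi_D)$ is a nonzero rational number: for $k\in 2\N$ the point $1-k$ is a negative odd integer and, because $D>0$, the character $\chi_D$ is even, so $1-k$ is not a trivial zero. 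Hence $c(m,0,v)=0$ for every $m<0$, which rules out all non-holomorphic contributions to $E(\tau,0)$.

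For $m=0$ the first summand $D^s v^s$ specializes to $1$, while the second summand again contains the vanishing factor $1/\Gamma(s)$; the remaining ratio $\Gamma(k+2s-1)/\Gamma(k+s)$ and the $L$-quotient $L(k+2s-1,\chi_D)/L(k+2s,\chi_D)$ are regular at $s=0$ (with $L(k,\chi_D)\ne 0$ from the absolute convergence region for $k\ge 2$), so this summand vanishes and $c(0,0,v)=1$. For $m>0$ the factors $(D/(4\pi))^s$ and $\cos(\pi s)$ equal $1$ at $s=0$, while $\Gamma(2s+k)/\Gamma(s+k)$ collapses to $\Gamma(k)/\Gamma(k)=1$.

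The only substantive step is the identification $\W_0(4\pi m v)=e^{-2\pi m v}$ for $m,v>0$, which via \eqref{Ws-def} reduces to $W_{k/2,(1-k)/2}(v)=v^{k/2}e^{-v/2}$. This follows because the indices satisfy $\kappa+\mu=k/2+(1-k)/2=1/2$, so in the standard decomposition of $W_{\kappa,\mu}$ as a linear combination of $M_{\kappa,\mu}$ and $M_{\kappa,-\mu}$ the first coefficient $\Gamma(-2\mu)/\Gamma(1/2-\mu-\kappa)$ vanishes (its denominator is $\Gamma(0)$) and the second term reduces to $M_{k/2,(k-1)/2}(v)=v^{k/2}e^{-v/2}M(0,k,v)=v^{k/2}e^{-v/2}$. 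Substituting back gives $c(m,0,v)\,e(mu)=\frac{2}{L(1-k,\chi_D)}\,m^{k/2-1}\,\sigma(\a,m,1-k)\,e(m\tau)$, and combining with the constant term $c(0,0,v)=1$ yields the claimed expansion. The Whittaker identification is the main (and classical) technical input; everything else is straightforward cancellation of $\Gamma$-factors.
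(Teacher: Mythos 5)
Your proof is correct and follows essentially the same route as the paper: substitute $s=0$ into Theorem~\ref{s-coef}, let the factor $1/\Gamma(s)$ annihilate the $m<0$ terms and the non-holomorphic part of the constant term, and use $\W_0(4\pi m v)=e^{-2\pi m v}$ to assemble $e(m\tau)$. Your derivation of $W_{k/2,(1-k)/2}(v)=v^{k/2}e^{-v/2}$ via the $M_{\kappa,\pm\mu}$ connection formula is slightly delicate here since $2\mu=1-k$ is a negative integer (the second coefficient is a ratio of two singular Gamma values and needs a limit), but the identity itself is classical and the paper's Lemma~\ref{W-derivative} gives the same fact via $\W_s(v)=e^{-v/2}v^{s}U(s,2s+k,v)$.
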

\begin{proof}
This theorem is proved by plugging in $s=0$ into Theorem~\ref{s-coef}. Because of $\W_0(v) = e^{-v/2}$ we have
\[
\W_0(4 \pi m v) e(mu) = e^{-2 \pi m v} e^{2 \pi i m u} = e^{2 \pi i m \tau } = e(m\tau).
\]
We lose the non-holomorphic components because of the simple pole at $s=0$ of $\Gamma(s)$.
\end{proof}

Next, we want to determine the derivative
\[
E'(\tau,0) := \frac{d}{ds} E(\tau,s) \mid_{s=0}
\]
which turns out to be not holomorphic anymore.
For that purpose the next lemma is useful.

\begin{lemma} \label{W-derivative} 
We have for $v>0$
\begin{align*}
\W_0'(v) := \frac{d}{ds} \W_s'(v) \mid_{s=0} = e^{-v/2} \sum_{j=1}^{k-1} \matrixCol{k-1}{j} \frac{(j-1)!}{v^j}.
\end{align*}
\end{lemma}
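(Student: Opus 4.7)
For $v > 0$ I would first rewrite $\W_s(v) = v^{-k/2} W_{k/2,(1-k)/2 - s}(v) = v^{-k/2} W_{k/2,(k-1)/2 + s}(v)$ using the symmetry $W_{\kappa,\mu} = W_{\kappa,-\mu}$, and then apply the standard Laplace integral representation
\[
W_{k/2,\, (k-1)/2 + s}(v) = \frac{e^{-v/2} v^{k/2}}{\Gamma(s)} \int_0^\infty e^{-t} t^{s-1} (1 + t/v)^{k-1+s} \, dt,
\]
valid for $\Re(s) > 0$. This gives $\W_s(v) = e^{-v/2} A(s)$ with
\[
A(s) := \frac{1}{\Gamma(s)} \int_0^\infty e^{-t} t^{s-1} (1 + t/v)^{k-1+s} \, dt,
\]
and reduces the problem to computing $A'(0)$.

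Second, I would split the integrand by writing $(1 + t/v)^{k-1+s} = (1+t/v)^{k-1} e^{s \log(1+t/v)}$ and expanding the exponential. This yields $A(s) = A_0(s) + s A_1(s) + O(s^2)$, where
\[
A_0(s) := \frac{1}{\Gamma(s)} \int_0^\infty e^{-t} t^{s-1} (1+t/v)^{k-1} \, dt
\]
and $A_1$ is the analogous integral with an extra factor of $\log(1+t/v)$. Since $\log(1+t/v) = O(t)$ as $t \to 0^+$, the $A_1$-integral is absolutely convergent and continuous through $s = 0$; combined with $1/\Gamma(s) = s + O(s^2)$ near $s = 0$ this shows $s A_1(s) = O(s^2)$. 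Consequently $A'(0) = A_0'(0)$.

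Third, the finite binomial expansion $(1+t/v)^{k-1} = \sum_{j=0}^{k-1} \binom{k-1}{j} v^{-j} t^j$ reduces $A_0(s)$ to a finite sum of Gamma integrals:
\[
A_0(s) = \frac{1}{\Gamma(s)} \sum_{j=0}^{k-1} \binom{k-1}{j} v^{-j} \Gamma(s+j) = 1 + \sum_{j=1}^{k-1} \binom{k-1}{j} v^{-j}\, s(s+1)\cdots(s+j-1),
\]
using $\Gamma(s+j)/\Gamma(s) = s(s+1)\cdots(s+j-1)$. Differentiating the polynomial $s(s+1)\cdots(s+j-1)$ at $s=0$ picks out its linear coefficient $(j-1)!$, so $A_0'(0) = \sum_{j=1}^{k-1} \binom{k-1}{j} (j-1)!/v^j$, and multiplying by $e^{-v/2}$ yields the claim.

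The only non-algebraic step is the estimate $s A_1(s) = O(s^2)$, which I expect to be the main (though still mild) obstacle: it requires a uniform bound on the $A_1$-integral in a complex neighborhood of $s = 0$, justifying analytic continuation of the integral representation from $\Re(s) > 0$ to $s = 0$ along with the desired Taylor behavior.
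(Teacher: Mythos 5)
Your proof is correct and follows essentially the same route as the paper: both reduce $\W_s(v)$ to the Laplace/Kummer-$U$ integral $e^{-v/2}\,\Gamma(s)^{-1}\int_0^\infty e^{-t}t^{s-1}(1+t/v)^{k-1+s}\,dt$, exploit the simple zero of $1/\Gamma(s)$ at $s=0$ to isolate the derivative, and finish with the binomial expansion and $\Gamma(s+j)/\Gamma(s)=s(s+1)\cdots(s+j-1)$. The only cosmetic differences are that you reach the integral via the symmetry $W_{\kappa,\mu}=W_{\kappa,-\mu}$ instead of Kummer's transformation, and you evaluate $A_0$ exactly as a polynomial rather than subtracting the $j=0$ term inside the integrand to extend convergence to $\Re(s)>-1$ as the paper does.
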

\begin{proof}
From equation~\eqref{Ws-def} it follows for $v>0$
\[
\W_s(v) = v^{-k/2} W_{k/2,(1-k)/2-s}(v).
\]
Using \cite[13.4.3]{handbook}, we obtain
\[
W_{k/2,(1-k)/2-s}(v)
= e^{-v/2} v^{1-k/2-s} U (1-k-s,2-k-2s,v)
\]
and hence
\[
\W_s(v) = e^{-v/2} v^{1-k-s} U (1-k-s,2-k-2s,v).
\]
Using Kummer's transformation (cf.~\cite[13.2.40]{handbook}) together with \cite[13.4.4]{handbook}, we end up in
\begin{align*}
\mathcal W_s(v)
= e^{-v/2} v^{s} U (s,2s+k,v)
= e^{-v/2} \frac{v^{s}}{\Gamma(s)} \int_0^\infty e^{-vt} t^{s-1} (1+t)^{s+k-1} dt
\end{align*}
for $\Re(s)>0$. Using
\[
\int_0^\infty e^{-vt} t^{s-1} dt
= \frac{1}{v} \int_0^\infty e^{-t} \br{ \frac{t}{v} }^{s-1} dt
= \frac{1}{v^s} \int_0^\infty e^{-t} t^{s-1} dt
= \frac{\Gamma(s)}{v^s},
\]
we can rewrite
\[
\mathcal W_s(v) = e^{-v/2} \br{1 + \frac{v^{s}}{\Gamma(s)} \int_0^\infty e^{-vt} t^{s-1} \br{ (1+t)^{s+k-1} -1}dt }
\]
where the integral on the right hand side converges now for $\Re(s)>-1$.
Having $\Gamma(s)$ with the simple pole of residue $1$ at $s=0$ in the denominator makes it easy to determine $\mathcal W_0'(v)$ by simply plugging in $s=0$ in the remaining factors and omitting $\Gamma(s)$:
\begin{align*}
\mathcal W_0'(v)
&= e^{-v/2} \int_0^\infty e^{-vt} \br{ (1+t)^{k-1} -1} \frac{dt}{t}\\
&= e^{-v/2} \int_0^\infty e^{-vt} \sum_{j=1}^{k-1} \matrixCol{k-1}{j} t^{j} \frac{dt}{t}\\
&= e^{-v/2} \sum_{j=1}^{k-1} \matrixCol{k-1}{j} \frac{1}{v} \int_0^\infty e^{-t} \br{\frac{t}{v}}^{j-1} dt\\
&= e^{-v/2} \sum_{j=1}^{k-1} \matrixCol{k-1}{j} \frac{(j-1)!}{v^j}.
\end{align*}
\end{proof}

\begin{lemma} \label{Gamma-quotient} 
Let $n \in \N$. Then it holds
\[
\frac{\Gamma'(n)}{\Gamma(n)} = H_{n-1} - \gamma
\]
where $H_{n}$ is the $n$th harmonic number and $\gamma$ is the Euler--Mascheroni constant.
\end{lemma}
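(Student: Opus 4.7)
The plan is to prove the identity by induction on $n$, using the functional equation $\Gamma(n+1) = n \Gamma(n)$ to reduce to the base case, and then invoking the classical evaluation $\Gamma'(1) = -\gamma$ for the base.

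First I would take the logarithmic derivative of $\Gamma(n+1) = n \Gamma(n)$. Writing $\psi := \Gamma'/\Gamma$ for the digamma function, this yields the recursion
\[
\psi(n+1) = \psi(n) + \frac{1}{n}.
\]
Iterating from $n=1$ gives $\psi(n) = \psi(1) + \sum_{j=1}^{n-1} 1/j = \psi(1) + H_{n-1}$, so the claim reduces to $\psi(1) = -\gamma$.

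For the base case, I would differentiate the defining integral $\Gamma(s) = \int_0^\infty e^{-t} t^{s-1}\,dt$ under the integral sign (justified on a neighbourhood of $s=1$ by standard dominated convergence arguments) to obtain
\[
\Gamma'(1) = \int_0^\infty e^{-t} \log t \, dt.
\]
Combined with $\Gamma(1) = 1$, the lemma follows once one identifies this integral with $-\gamma$. This identification is classical: it can be derived either from the series representation $\gamma = \lim_{N \to \infty}\bigl(\sum_{k=1}^N 1/k - \log N\bigr)$ by splitting the integral at $t=1$ and using integration by parts on each piece, or directly from the Weierstrass product $1/\Gamma(z) = z e^{\gamma z} \prod_{k \geq 1} (1+z/k) e^{-z/k}$, whose logarithmic derivative at $z=1$ telescopes to $-\gamma$.

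There is no real obstacle here; the only subtle point is the evaluation $\Gamma'(1) = -\gamma$, which is completely standard and could equivalently be cited from any reference on the Gamma function. The induction itself is a one-line computation once the recursion for $\psi$ is in hand.
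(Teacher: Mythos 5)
Your proof is correct and follows the same route as the paper: take the logarithmic derivative of the functional equation $\Gamma(s+1)=s\Gamma(s)$ to get the digamma recursion, induct, and anchor the base case with $\Gamma'(1)=-\gamma$. The paper simply cites $\Gamma'(1)=-\gamma$ where you supply the (standard) justification, so there is no substantive difference.
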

\begin{proof}
By the functional equation of the Gamma function we have
\[
\frac{\Gamma'(s+1)}{\Gamma(s+1)} = \frac{s \Gamma'(s) + \Gamma(s)}{s\Gamma(s)} = \frac{\Gamma'(s)}{\Gamma(s)} + \frac{1}{s}.
\]
The statement follows now by induction and $\Gamma'(1)=-\gamma$.
\end{proof}

\begin{theorem} \label{eisen-der-s0} 
The Fourier coefficients of
\[
E'(\tau,0) = \sum_{m \in \Z} \tilde c(m,v) e(m\tau)
\]
are given by
\[
\tilde c(m,v) =
\begin{dcases}
\log(vD) + \frac{i^k 2^{2-k} \pi v^{1-k} }{(k-1)D^{k-1/2}}  \frac{L(k-1,\chi_D)}{L(k,\chi_D)}, \quad &m=0,\\
\frac{2(k-1)!}{ L(1-k,\chi_D) } |m|^{k/2-1} \sigma(\a,m,1-k) \Gamma(1-k,4 \pi |m| v), \quad &m<0
\end{dcases}
\]
for non-positive $m$. For $m>0$ we have
\begin{align*}
&\tilde c(m,v) = \frac{2 |m|^{k/2-1} \sigma(\a,m,1-k)  }{L(1-k,\chi_D)}\\
&\times \br{\log \br{ \frac{D}{4\pi}} +  H_{k-1}-\gamma +2 \frac{  L'(1-k,\chi_D) }{ L(1-k,\chi_D)}
-2 \frac{\sigma'(\a,m,1-k)}{\sigma(\a,m,1-k)} +\sum_{j=1}^{k-1} \matrixCol{k-1}{j} \frac{(j-1)!}{(4 \pi m v)^j}}.
\end{align*}
\end{theorem}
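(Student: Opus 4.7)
The strategy is to differentiate the formulas of Theorem~\ref{s-coef} at $s=0$ term by term. Since $e(m\tau) = e^{-2\pi m v}\,e(mu)$, converting from the expansion in $e(mu)$ given in Theorem~\ref{s-coef} to the expansion in $e(m\tau)$ introduces an exponential factor:
\[
\tilde c(m,v) \;=\; e^{2\pi m v}\,\frac{d}{ds}c(m,s,v)\Big|_{s=0}.
\]
The three cases $m=0$, $m<0$ and $m>0$ are then handled separately.

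For $m=0$ the summand $D^s v^s$ differentiates directly to $\log(vD)$. The second summand in Theorem~\ref{s-coef} carries a factor $1/\Gamma(s)$, which has a simple zero at $s=0$ with derivative~$1$; hence its $s$-derivative at $s=0$ equals the remaining prefactor evaluated at $s=0$, and the identity $\Gamma(k-1)/\Gamma(k)=1/(k-1)$ yields the claimed formula. The case $m<0$ is analogous: the whole coefficient $c(m,s,v)$ carries a factor $1/\Gamma(s)$, so the derivative collapses to the value of the other factors at $s=0$, namely a constant multiple of $\W_0(4\pi m v)$. To obtain the incomplete gamma function $\Gamma(1-k,4\pi|m|v)$ appearing in the theorem, one identifies $\W_0(4\pi m v)$ for $m<0$ by combining \eqref{Ws-def} with the Whittaker-to-Kummer relation $W_{\kappa,\mu}(z)=e^{-z/2}z^{\mu+1/2}U(\mu-\kappa+1/2,2\mu+1,z)$ and the reduction $U(1,2-k,z)=z^{k-1}e^{z}\Gamma(1-k,z)$ (Kummer's transformation together with $U(k,k,z)=e^{z}\Gamma(1-k,z)$). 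The resulting exponential $e^{2\pi|m|v}$ cancels with the normalisation factor $e^{2\pi m v}=e^{-2\pi|m|v}$.

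For $m>0$ the coefficient $c(m,0,v)$ is nonzero and, after multiplication by $e^{2\pi m v}$, reproduces exactly the $m$th Fourier coefficient of $E(\tau,0)$ in Theorem~\ref{eisen-s0}. This invites logarithmic differentiation:
\[
\tilde c(m,v) \;=\; \br{e^{2\pi m v} c(m,0,v)} \cdot \frac{d}{ds}\log c(m,s,v)\Big|_{s=0}.
\]
Each factor of $c(m,s,v)$ contributes one term to the logarithmic derivative: $s\log(D/4\pi)$ gives $\log(D/4\pi)$; the $\cos(\pi s)$ term vanishes; the two Gamma factors combine, via Lemma~\ref{Gamma-quotient}, to $2\psi(k)-\psi(k)=H_{k-1}-\gamma$; the chain rule produces the factor~$2$ in front of $L'(1-k,\chi_D)/L(1-k,\chi_D)$ and of $\sigma'(\a,m,1-k)/\sigma(\a,m,1-k)$; and the contribution of $\W_s$ is $\W_0'(4\pi m v)/\W_0(4\pi m v)$, which by Lemma~\ref{W-derivative} equals $\sum_{j=1}^{k-1}\tbinom{k-1}{j}(j-1)!/(4\pi m v)^j$ after the exponential prefactor $e^{-2\pi m v}=\W_0(4\pi m v)$ cancels. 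Collecting the terms yields the formula. The main technical step is the reduction of $\W_0(4\pi m v)$ to an incomplete gamma function for $m<0$; once this identification is in place, the $m>0$ case is a careful but routine application of the chain rule combined with the two preparatory lemmas.
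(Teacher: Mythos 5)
Your proposal is correct and follows essentially the same route as the paper: differentiate Theorem~\ref{s-coef} at $s=0$, exploit the simple zero of $1/\Gamma(s)$ for $m\le 0$, and use logarithmic differentiation together with Lemmas~\ref{W-derivative} and~\ref{Gamma-quotient} for $m>0$, with the factor $e^{2\pi m v}$ accounting for the passage from $e(mu)$ to $e(m\tau)$. The only substantive addition is that you derive the identity $\W_0(v)=e^{-v/2}\Gamma(1-k,|v|)$ for $v<0$ from the Whittaker-to-Kummer relations, which the paper simply asserts; your derivation is accurate.
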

\begin{proof}
Again we work with Theorem~\ref{s-coef}. Computing the derivative at those components where we have $\Gamma(s)$ in the denominator is easy. We simply ignore $\Gamma(s)$ and plug in $s=0$ in the remaining factors like in the proof of Lemma~\ref{W-derivative}. Hence, we obtain
\begin{align*}
\tilde c(0,v)
&= \log(vD) + 
\frac{i^k 2^{2-k} \pi v^{1-k} }{D^{k-1/2}} \frac{\Gamma(k-1)}{\Gamma(k)}  \frac{L(k-1,\chi_D)}{L(k,\chi_D)}\\
&= \log(vD) + \frac{i^k 2^{2-k} \pi v^{1-k} }{(k-1)D^{k-1/2}}  \frac{L(k-1,\chi_D)}{L(k,\chi_D)}.
\end{align*}
Using $\mathcal W_0(v) = e^{-v/2} \Gamma(1-k,|v|)$ for $v<0$ we obtain for $m<0$ with the same proceeding
\begin{align*}
c'(m,0,v)
&= 2 \frac{  \Gamma(k)   }{ L(1-k,\chi_D) } |m|^{k/2-1} \sigma(\a,m,1-k)\W_0(4 \pi m v)\\
&= \frac{2(k-1)!}{ L(1-k,\chi_D) } |m|^{k/2-1} \sigma(\a,m,1-k) e^{-2 \pi m v} \Gamma(1-k,4 \pi |m| v) .
\end{align*}
Thus
\[
\tilde c(m,v) = \frac{2(k-1)!}{ L(1-k,\chi_D) } |m|^{k/2-1} \sigma(\a,m,1-k) \Gamma(1-k,4 \pi |m| v).
\]
Finally, for $m>0$ we need Lemma~\ref{W-derivative} and we actually have to compute derivaties. We compute the logarithmic derivative at $s=0$ first:
\begin{align*}
\frac{c'(m,0,v)}{c(m,0,v)}
= \log \br{ \frac{D}{4\pi}} + \frac {- \pi \sin(\pi \cdot 0)}{ \cos(\pi \cdot 0)} + \frac{2 \Gamma'(k)}{\Gamma(k)} - \frac{\Gamma'(k)}{\Gamma(k)} - \frac{ -2 L'(1-k,\chi_D) }{ L(1-k,\chi_D)}\\
+ \frac{-2\sigma'(\a,m,1-k)}{\sigma(\a,m,1-k)} +\sum_{j=1}^{k-1} \matrixCol{k-1}{j} \frac{(j-1)!}{(4 \pi m v)^j}\\
= \log \br{ \frac{D}{4\pi}} +  \frac{\Gamma'(k)}{\Gamma(k)} +2 \frac{  L'(1-k,\chi_D) }{ L(1-k,\chi_D)}
-2 \frac{\sigma'(\a,m,1-k)}{\sigma(\a,m,1-k)} +\sum_{j=1}^{k-1} \matrixCol{k-1}{j} \frac{(j-1)!}{(4 \pi m v)^j}.
\end{align*}
Further, we have
\[
c(m,0,v) = \frac{2 |m|^{k/2-1}}{L(1-k,\chi_D)} \sigma(\a,m,1-k) \W_0(4 \pi m v).
\]
Therefore in total, we obtain with Lemma~\ref{Gamma-quotient}
\begin{align*}
&\tilde c(m,v) = \frac{2 |m|^{k/2-1} \sigma(\a,m,1-k)  }{L(1-k,\chi_D)}\\
&\times \br{\log \br{ \frac{D}{4\pi}} +  H_{k-1}-\gamma +2 \frac{  L'(1-k,\chi_D) }{ L(1-k,\chi_D)}
-2 \frac{\sigma'(\a,m,1-k)}{\sigma(\a,m,1-k)} +\sum_{j=1}^{k-1} \matrixCol{k-1}{j} \frac{(j-1)!}{(4 \pi m v)^j}}.
\end{align*}
\end{proof}

\section{The Eisenstein series and its derivative at \texorpdfstring{$s=0$}{s=0} for weight \texorpdfstring{$k=2$}{k=2}} 

The most important special case of Theorem~\ref{eisen-s0} is
\begin{corollary} \label{k2-holo} 
For $k=2$ we have
\[
E(\tau,0) = 1 + \frac{2}{ L(-1,\chi_D)} \sum_{m=1}^\infty \sigma(\a,m,-1) e(m\tau).
\]
\end{corollary}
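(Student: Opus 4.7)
The plan is simply to specialize Theorem~\ref{eisen-s0} to the case $k=2$. Recall that theorem gives
\[
E(\tau,0) = 1 + \frac{2}{L(1-k,\chi_D)} \sum_{m=1}^\infty |m|^{k/2-1} \sigma(\a,m,1-k)\, e(m\tau),
\]
valid for all positive even $k$, and the corollary is exactly the formula one obtains by substituting $k=2$.

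Concretely, I would observe that with $k=2$ the exponent $k/2-1$ in $|m|^{k/2-1}$ vanishes, so the factor $|m|^{k/2-1}$ reduces to $1$ for every $m\ge 1$. Likewise, the argument $1-k$ in both $L(1-k,\chi_D)$ and $\sigma(\a,m,1-k)$ becomes $-1$. Substituting these values into the formula of Theorem~\ref{eisen-s0} yields the claimed expansion
\[
E(\tau,0) = 1 + \frac{2}{L(-1,\chi_D)} \sum_{m=1}^\infty \sigma(\a,m,-1)\, e(m\tau).
\]

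There is no genuine obstacle here: the only thing to check is that the formula of Theorem~\ref{eisen-s0} is indeed valid at $k=2$, which is the case since $k=2$ lies in the admissible range $k \in 2\N$ used throughout Section~2. Hence the corollary follows immediately from the theorem without any further computation.
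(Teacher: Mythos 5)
Your proposal is correct and is exactly how the paper obtains this corollary: it is stated as an immediate specialization of Theorem~\ref{eisen-s0} to $k=2$, with $|m|^{k/2-1}=1$ and $1-k=-1$, and the paper gives no further argument.
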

\begin{remark} \label{geometric-interpretation} 
This particular Eisenstein series has a geometric interpretation in terms of Hilbert modular surfaces (cf.~\cite[Theorem 5.1]{geer1988hms}, \cite{hirzebruch1976intersection}): With a suitable scaling of that series the constant term can be interpreted as the volume of the Hilbert modular surface $X(\a)$ and the coefficients for $m>0$ as the volumes of the respective Hirzebruch--Zagier divisors.
\end{remark}

Finally, let us have a look at $E'(\tau,0)$ for $k=2$ as well.
In the constant term of its Fourier expansion the value $L(1,\chi_D)$ occurs which is related by the so-called class number formula
\[
h_K = \frac{\sqrt{D}}{2\log(\ep_0)}L(1,\chi_D)
\]
to the class number $h_K$ of the number field $K$. Here $\ep_0$ is the fundamental unit of $\OK$, the ring of integers of $K$, i.e., it is the smallest $\ep \in \OK^\times$ with $\ep>1$.
We obtain our final result:
\begin{corollary} \label{k2-der} 
The Fourier coefficients of
\[
E'(\tau,0) = \sum_{m \in \Z} \tilde c(m,v) e(m\tau)
\]
for weight $k=2$ are given by
\[
\tilde c(m,v) =
\begin{dcases}
\log(vD) + \frac{h_K \log(\ep_0)}{\sqrt{D}v \pi L(-1,\chi_D)}, \quad &m=0,\\
\frac{2 \sigma(\a,m,-1)}{ L(-1,\chi_D) }  \Gamma(-1,4 \pi |m| v), \quad &m<0
\end{dcases}
\]
for non-positive $m$. For $m>0$ we have
\begin{align*}
&\tilde c(m,v) = \frac{2 \sigma(\a,m,-1)  }{L(-1,\chi_D)}
 \br{\log \br{ \frac{D}{4\pi}} +  1-\gamma +2 \frac{  L'(-1,\chi_D) }{ L(-1,\chi_D)}
-2 \frac{\sigma'(\a,m,-1)}{\sigma(\a,m,-1)} +\frac{1}{4 \pi m v}}.
\end{align*}
\end{corollary}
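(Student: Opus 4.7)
The plan is to obtain Corollary~\ref{k2-der} directly by specializing Theorem~\ref{eisen-der-s0} to $k=2$; the only substantive work occurs in the constant term, which needs to be rewritten using the functional equation of $L(s,\chi_D)$ together with the class number formula.

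For $m<0$ and for $m>0$ the coefficients follow by mechanical substitution. At $k=2$ we have $(k-1)! = 1$, $H_{k-1} = H_1 = 1$, and the sum $\sum_{j=1}^{k-1}\binom{k-1}{j}\frac{(j-1)!}{(4\pi m v)^j}$ collapses to the single term $\frac{1}{4\pi m v}$. Inserting $k=2$ into the two non-constant formulae of Theorem~\ref{eisen-der-s0} and simplifying yields exactly the expressions claimed.

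The constant term requires slightly more care. Plugging $k=2$ into the formula for $\tilde c(0,v)$ gives
\[
\tilde c(0,v) = \log(vD) + \frac{i^2 \, 2^{0} \pi v^{-1}}{1 \cdot D^{3/2}} \frac{L(1,\chi_D)}{L(2,\chi_D)} = \log(vD) - \frac{\pi}{v D^{3/2}} \frac{L(1,\chi_D)}{L(2,\chi_D)}.
\]
Next I apply the functional equation of $L(s,\chi_D)$ stated in the previous section at $s=0$, $k=2$ to get
\[
L(2,\chi_D) = \br{\frac{\pi}{D}}^{3/2} \frac{\Gamma(-1/2)}{\Gamma(1)} L(-1,\chi_D) = -\frac{2\pi^2}{D^{3/2}} L(-1,\chi_D),
\]
using $\Gamma(-1/2) = -2\sqrt{\pi}$. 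Inserting this identity transforms the non-logarithmic part of $\tilde c(0,v)$ into $\frac{L(1,\chi_D)}{2\pi v \, L(-1,\chi_D)}$. Finally, the class number formula $L(1,\chi_D) = \frac{2 h_K \log(\ep_0)}{\sqrt{D}}$ gives the form stated in the corollary.

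I do not expect any genuine obstacle: the work is bookkeeping with the functional equation and the class number formula. The only point where a sign error could creep in is the evaluation of $\Gamma(-1/2)$ combined with the factor $i^k = i^2 = -1$, so verifying that the two minus signs cancel to produce the positive coefficient $\frac{h_K \log(\ep_0)}{\sqrt{D} v \pi L(-1,\chi_D)}$ is the one step worth double-checking.
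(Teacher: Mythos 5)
Your proposal is correct and follows essentially the same route as the paper: specialize Theorem~\ref{eisen-der-s0} to $k=2$, then rewrite the constant term via the functional equation $L(2,\chi_D) = -\frac{2\pi^2}{D^{3/2}}L(-1,\chi_D)$ and the class number formula. Your sign check ($i^2=-1$ against $\Gamma(-1/2)=-2\sqrt{\pi}$) is exactly the cancellation the paper's computation relies on, and it works out as you state.
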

\begin{proof}
We plug in $k=2$ into Theorem \ref{eisen-der-s0} and obtain
\[
\tilde c(m,v) =
\begin{dcases}
\log(vD) - \frac{\pi v^{-1} }{D^{3/2}}  \frac{L(1,\chi_D)}{L(2,\chi_D)}, \quad &m=0,\\
\frac{2 \sigma(\a,m,-1)}{ L(-1,\chi_D) }  \Gamma(-1,4 \pi |m| v), \quad &m<0.
\end{dcases}
\]
The functional equation of $L(s,\chi_D)$ yields
\[
L(2,\chi_D)
= -\frac{2\pi^2}{D^{3/2}}L(-1,\chi_D).
\]
Together with the class number formula we obtain
\begin{align*}
-\frac{\pi v^{-1} }{D^{3/2}}  \frac{L(1,\chi_D)}{L(2,\chi_D)}
= -\frac{\pi v^{-1} }{D^{3/2}}  \frac{2 h_K \log(\ep_0)}{\sqrt{D}}\frac{-D^{3/2}}{2\pi^2 L(-1,\chi_D)}
= \frac{h_K \log(\ep_0)}{\sqrt{D}v \pi L(-1,\chi_D)}.
\end{align*}
This finishes the proof for non-positive $m$. For $m>0$ we plug in $k=2$ into Theorem~\ref{eisen-der-s0} as well and obtain
\begin{align*}
&\tilde c(m,v) = \frac{2 \sigma(\a,m,-1)  }{L(-1,\chi_D)}
 \br{\log \br{ \frac{D}{4\pi}} +  1-\gamma +2 \frac{  L'(-1,\chi_D) }{ L(-1,\chi_D)}
-2 \frac{\sigma'(\a,m,-1)}{\sigma(\a,m,-1)} +\frac{1}{4 \pi m v}}
\end{align*}
as desired.
\end{proof}


\end{document}